\newtheorem{theorem}{Theorem}[]
\newtheorem{lemma}[theorem]{Lemma}
\newtheorem{proposition}[theorem]{Proposition}
\newtheorem{cor}[theorem]{Corollary}
\theoremstyle{definition}
\theoremstyle{remark}
\numberwithin{equation}{section}
\newcommand{\mysetminusD}{\hbox{\tikz{\draw[line width=0.6pt,line cap=round] (3pt,0) -- (0,6pt);}}}
\newcommand{\mysetminusT}{\mysetminusD}
\newcommand{\mysetminusS}{\hbox{\tikz{\draw[line width=0.45pt,line cap=round] (2pt,0) -- (0,4pt);}}}
\newcommand{\mysetminusSS}{\hbox{\tikz{\draw[line width=0.4pt,line cap=round] (1.5pt,0) -- (0,3pt);}}}
\newcommand{\mysetminus}{\mathbin{\mathchoice{\mysetminusD}{\mysetminusT}{\mysetminusS}{\mysetminusSS}}}
\newcommand{\Q}{\mathbb{Q}}
\title{On Widmer's criteria for the Northcott property}
\author{Sara Checcoli}
\address[Sara Checcoli]{Univ. Grenoble Alpes, CNRS, IF, 38000 Grenoble, France}
\email{sara.checcoli@univ-grenoble-alpes.fr}
\author{Arno Fehm}
\address[Arno Fehm]{Technische Universit\"at Dresden, Fakult\"at Mathematik, Institut f\"ur Algebra, 01062 Dresden, Germany} 
\email{arno.fehm@tu-dresden.de}
\begin{document}

\maketitle

\begin{abstract}
    Recently, Widmer introduced a new sufficient criterion for the Northcott property on the finiteness of elements of bounded height
    in infinite algebraic extensions of number fields. 
    We provide a simplification of Widmer's criterion when the extension is abelian,
    and use this to exhibit fields with the Northcott property that do not satisfy Widmer's new criterion.
    We also show how the construction of pseudo algebraically closed fields with the Northcott property, carried out in previous work using the first version of Widmer's criterion, can be simplified.
\end{abstract}

\section{Introduction}
\maketitle

\noindent
Let $h$ denote the absolute logarithmic Weil height on 
the field $\overline{\mathbb{Q}}$ of algebraic numbers. 
A celebrated theorem of Northcott \cite{Northcott} gives that 
for every $B>0$ and $D>0$ there are only finitely many $\alpha\in\overline{\mathbb{Q}}$ with $h(\alpha)<B$ and $[\mathbb{Q}(\alpha):\mathbb{Q}]<D$,
cf.~\cite[Theorem 1.6.8]{BG}.
Accordingly, one says that
a subfield $L$ of $\overline{\mathbb{Q}}$ has the \emph{Northcott property} if
for every $B>0$ the set of $\alpha\in L$ with $h(\alpha)<B$ is finite.
This property was initially introduced and explored by Bombieri and Zannier in \cite{BZ}. While its validity for number fields
follows directly from Northcott's theorem, 
establishing it for infinite extensions of the rationals remains a challenging task. 
A comprehensive survey on the Northcott property, including its connections to algebraic dynamics and decidability in logic, can be found in \cite{Wid23}.

A first example of an infinite algebraic extension of $\mathbb{Q}$ with the Northcott property was given by Bombieri and Zannier \cite[Theorem 1]{BZ}, who showed that for any positive integer $d$ and any number field $K$, 
the compositum $K^{(d)}_{\rm ab}$ of all abelian extensions of $K$ of degree at most $d$ has the Northcott property.
A general sufficient criterion was subsequently introduced by Widmer in \cite[Theorem 3]{Wid11}: 
He demonstrated that the Northcott property holds for the union of an infinite tower of number fields, whenever the relative discriminants have \emph{sufficiently rapid growth} at each non-trivial step within the tower. This enabled him to provide new examples of fields with the Northcott property that were not accounted for in the result of Bombieri and Zannier.

Already Bombieri and Zannier in \cite{BZ}
had given a general sufficient condition
for the Northcott property for algebraic extensions of $\mathbb{Q}$, based on the splitting of primes,
but they speculated that their criterion might be satisfied only for number fields.
In \cite[Theorem~1.5]{CF}, 
the authors of the present paper
constructed examples of infinite algebraic extensions of $\mathbb{Q}$
not contained in $K^{(d)}_{\rm ab}$
to which this criterion of Bombieri and Zannier can be applied, while Widmer's criterion cannot.

Very recently, Widmer in \cite{Wid23} introduced an improved version of his criterion and proved that it not only implies the result from \cite{Wid11} but also applies to $K^{(d)}_{\rm ab}$. To state Widmer's result we need to introduce some notation.
For $M/K$ an extension of number fields, let $D_{M/K}$ denote the relative discriminant 
and $N_{M/K}$ the relative norm,
and 
\[
 \gamma(M/K)=\sup_{F/K \textrm{ finite}} N_{F/\Q}(D_{MF/F})^{\frac{1}{[MF:\Q][MF:F]}},
\]
where $MF$ is the compositum in $\overline{\mathbb{Q}}$.
Widmer's new criterion is the following:

\begin{theorem}[{\cite[Theorem 8]{Wid23}}]\label{new-crit-Wid}
Let $K$ be a number field and let $L$ be an infinite algebraic extension of $K$. 
Suppose that 
\begin{equation}\label{eqn:liminf}
\liminf_{K\subseteq M\subset L}\gamma({M/K})=\infty
\end{equation}
where $M$ runs over all number fields in $L$ containing $K$. Then $L$ has the Northcott property.
\end{theorem}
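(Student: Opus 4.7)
The plan is to argue by contradiction. Suppose $L$ does not have the Northcott property; then there exist $B>0$ and an infinite sequence of pairwise distinct elements $\alpha_1,\alpha_2,\ldots\in L$ with $h(\alpha_n)\leq B$. Set $M_n:=K(\alpha_1,\dots,\alpha_n)$, an ascending chain of number fields in $L$. Northcott's theorem over $\Q$ forces $[M_n:K]\to\infty$, since otherwise all of the $\alpha_n$ would lie in a fixed number field, contradicting the finiteness of bounded-height elements there. I will derive the desired contradiction by showing that $\gamma(M_n/K)$ nevertheless stays bounded as $n\to\infty$.

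The main analytic input is a Silverman--Mahler type relative discriminant bound: for any number field $E$ and any $\alpha\in\overline{\Q}$ with $[E(\alpha):E]=e$,
\[
 \log N_{E/\Q}\bigl(D_{E(\alpha)/E}\bigr) \;\leq\; [E:\Q]\bigl(2e(e-1)\,h(\alpha) + e\log e\bigr),
\]
obtained from the fact that $D_{E(\alpha)/E}$ divides the discriminant of the minimal polynomial of $\alpha$ over $E$, and that the latter is controlled by the Mahler measures of the conjugates of $\alpha$.

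Now fix an arbitrary finite extension $F/K$ and set $F_n:=M_nF$. Whenever $\alpha_{n+1}\notin F_n$, apply the bound above with $E=F_n$ and $\alpha=\alpha_{n+1}$, using $h(\alpha_{n+1})\leq B$; combine this with the tower formula for discriminants
\[
 N_{F/\Q}(D_{F_{n+1}/F}) \;=\; N_{F/\Q}(D_{F_n/F})^{[F_{n+1}:F_n]}\cdot N_{F_n/\Q}(D_{F_{n+1}/F_n}).
\]
A routine induction on $n$, after normalising by $[F_n:\Q][F_n:F]$, then shows that
\(
 N_{F/\Q}(D_{F_n/F})^{1/([F_n:\Q][F_n:F])}
\)
is bounded by a constant depending only on $B$---uniformly in both $n$ and $F$. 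Taking the supremum over $F$ yields a uniform upper bound on $\gamma(M_n/K)$, contradicting~(\ref{eqn:liminf}).

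The main obstacle I anticipate is precisely this uniformity in $F$: the Silverman--Mahler estimate at each step carries an unavoidable factor $[F_n:\Q]=[F:\Q][F_n:F]$, and one must verify that the double normalisation $1/([F_n:\Q][F_n:F])$ baked into the definition of $\gamma$ cancels the troublesome $[F:\Q]$ and tames the $\log e$ remainder from the Mahler measure estimate. This double normalisation is the essential novelty of $\gamma$ in \cite{Wid23} compared to the older expression from \cite{Wid11}, and it is what allows the criterion to capture fields such as $K^{(d)}_{\rm ab}$ that the earlier criterion missed.
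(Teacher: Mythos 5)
This theorem is quoted in the paper from \cite[Theorem 8]{Wid23} without proof, so there is no internal argument to compare yours against; judged against Widmer's original proof, your proposal essentially reconstructs it, and it is sound. The strategy --- contraposition, the chain $M_n=K(\alpha_1,\dots,\alpha_n)$, Silverman's relative discriminant--height inequality at each step $F_n\subseteq F_n(\alpha_{n+1})$ with $F_n=M_nF$, and the tower formula for discriminants --- is the right one, and the uniformity in $F$ that you flag as the main obstacle does close. Concretely, with $d_n=[F_n:F]$, $e_n=[F_{n+1}:F_n]$ and $g_n=\frac{1}{[F_n:\Q][F_n:F]}\log N_{F/\Q}(D_{F_n/F})$, your two ingredients combine to give, whenever $e_n\geq 2$,
\[
 g_{n+1}\;\leq\;\frac{g_n}{e_n}+\frac{2e_n(e_n-1)B+e_n\log e_n}{e_n^{2}\,d_n}\;\leq\;\frac{g_n}{2}+2B+1,
\]
while $g_{n+1}=g_n$ if $e_n=1$; since $g_0=0$ for every $F$ (because $F_0=KF=F$), induction yields $g_n\leq 4B+2$ uniformly in $n$ and in $F$, hence $\gamma(M_n/K)\leq e^{4B+2}$ for all $n$. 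As Northcott's theorem forces $[M_n:\Q]\to\infty$, this produces infinitely many distinct $M$ with $\gamma(M/K)$ bounded, contradicting (\ref{eqn:liminf}). Two points should be made precise in a full write-up: (a) the discriminant bound must be taken in its general form valid for non-integral $\alpha$ (via the primitive, not necessarily monic, minimal polynomial, as in Silverman's inequality), since elements of bounded height need not be algebraic integers; (b) the uniformity in $F$ does not require $d_n$ to grow --- $d_n\geq 1$ suffices --- the essential mechanism being that the inherited term $g_n$ is divided by $e_n\geq 2$ at every nontrivial step, which is exactly what the double normalisation $\frac{1}{[MF:\Q][MF:F]}$ is designed to achieve.
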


The first aim of the present note is to show how to slightly simplify Widmer's criterion in certain situations. 
More precisely, in Section \ref{sec-simp-Wid} Corollary \ref{simply-Wid},  
we prove that if $M/K$ is a Galois extension, to compute $\gamma({M/K})$ it is enough to consider the supremum only over finite extensions $F$ of $K$ that are \emph{contained in} $M$, rather than over \emph{all} finite extensions of $K$.
This can simplify the verification of the failure of assumption (\ref{eqn:liminf}) in Theorem \ref{new-crit-Wid} in certain cases, for instance when $L/K$ is abelian (as in several results in \cite{BZ} and \cite{CF}). 

In Section~\ref{sec-field-N-not-Wid}
we then use Corollary \ref{simply-Wid} to show that the construction in the proof of \cite[Theorem 1.5]{CF} yields infinite Galois extensions that resist even the new criterion of Widmer, confirming the criterion from \cite{BZ} as the only alternative method for establishing the Northcott property that is currently known:

\begin{theorem}\label{main-th}
There exists an infinite Galois extension $L/\Q$ with the Northcott property
that does not satisfy condition $(\ref{eqn:liminf})$ of Theorem \ref{new-crit-Wid} for any number field $K\subseteq L$.
\end{theorem}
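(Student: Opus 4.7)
The natural approach is to revisit the construction from \cite[Theorem 1.5]{CF}. That construction produces an infinite algebraic extension $L/\Q$ for which the Northcott property is established via the Bombieri--Zannier splitting-of-primes criterion, while Widmer's original discriminant criterion fails. I would first check, by inspecting the construction, that $L/\Q$ can be taken to be Galois, or else replace $L$ by its Galois closure and verify that the Bombieri--Zannier splitting argument still applies so that the Northcott property persists; the freedom in choosing the auxiliary primes in \cite{CF} should leave ample room for this.

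Fix an arbitrary number field $K\subseteq L$. Since $L/\Q$ is Galois, every finite subextension of $L/K$ is contained in some finite Galois subextension $M/K$, so it suffices to exhibit an infinite family of such $M$ along which $\gamma(M/K)$ stays bounded. For each such Galois $M/K$, Corollary~\ref{simply-Wid} reduces the defining supremum to one over the finitely many intermediate fields $K\subseteq F\subseteq M$:
\[
\gamma(M/K) \;=\; \sup_{K\subseteq F\subseteq M} N_{F/\Q}(D_{M/F})^{\frac{1}{[M:\Q][M:F]}}.
\]
The plan is then to bound this supremum by a constant independent of $M$ as $M$ exhausts $L/K$.

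To bound the individual quantities $N_{F/\Q}(D_{M/F})^{1/([M:\Q][M:F])}$ I would combine the tower formula for relative discriminants with the fine control on ramification built into the construction of \cite{CF}: only finitely many rational primes ramify in $L/\Q$ and the ramification indices are uniformly bounded. The reciprocal exponent $1/([M:\Q][M:F])$ is exactly the normalization that damps slow discriminant growth. The extreme cases are transparent: for $F=M$ the quantity equals~$1$, and for $F=K$ the required bound is essentially the failure of Widmer's original criterion already verified in \cite{CF}. The main obstacle is the uniform bound for all \emph{intermediate} $F$; here the Galois hypothesis together with, depending on the precise structure of the tower, the conductor--discriminant formula (when $M/K$ is abelian) or a prime-by-prime ramification analysis using that the inertia subgroups in $\Gal(M/K)$ have bounded size, should provide the uniform control needed to keep $\gamma(M/K)$ bounded along the chosen exhaustion, contradicting (\ref{eqn:liminf}) for every $K$.
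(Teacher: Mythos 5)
Your overall architecture matches the paper's: take the field $L$ from \cite{CF}, note it is Galois over $\Q$, fix $K\subseteq M\subset L$, use Corollary~\ref{simply-Wid} to replace the supremum over all finite $F/K$ by the supremum over the intermediate fields $K\subseteq F\subseteq M$, and then try to bound $\gamma_M(F)=N_{F/\Q}(D_{M/F})^{1/([M:\Q][M:F])}$ uniformly. One remark on the first step: no Galois closure is needed, since the field constructed in \cite{CF} is a compositum of cyclic extensions of $\Q$ and hence abelian over $\Q$; this is also what guarantees that \emph{every} $M/K$ is Galois, so that Corollary~\ref{simply-Wid} applies to the whole net of subfields $M$ and not just to a chosen exhausting subfamily (which is what one needs to conclude that the $\liminf$ in (\ref{eqn:liminf}) is finite).

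The genuine gap is in your final step, where the uniform bound is made to rest on properties the construction does not have. In \cite{CF}, $L$ is the compositum of cyclic extensions $F_i/\Q$ of pairwise distinct prime degrees $p_i$ whose discriminants $\Delta_{F_i}$ are pairwise coprime; by Minkowski each $|\Delta_{F_i}|>1$, so \emph{infinitely many} rational primes ramify in $L/\Q$, and since any ramified prime in a cyclic extension of prime degree $p_i$ is totally ramified, the inertia subgroups of $\Gal(M/K)$ have orders divisible by $p_i\to\infty$. Thus both ``only finitely many primes ramify'' and ``inertia subgroups have bounded size'' are false, and the proposed conductor--discriminant or prime-by-prime ramification analysis has nothing to run on. What actually yields the bound (Proposition~\ref{main-details} in the paper) is: (a) writing $M=FM'$ with $M'$ the compositum of those $F_i$ contained in $M$ but not in $F$, the coprimality of the discriminants gives the exact identity $|\Delta_M|=|\Delta_F|^{[M':\Q]}\cdot|\Delta_{M'}|^{[F:\Q]}$, whence $\gamma_M(F)=|\Delta_{M'}|^{1/[M':\Q]^2}$ exactly; and (b) the quantitative input $|\Delta_{F_i}|^{1/p_i^2}\leq 3$ from \cite{CF}, combined with the elementary inequality $\sum_{i\in J}p_i\leq\prod_{i\in J}p_i$ applied to the exponents, gives $|\Delta_{M'}|^{1/[M':\Q]^2}\leq 3$. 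Neither (a) nor (b) appears in your sketch, and without them the uniform bound on $\gamma(M/K)$ --- the heart of the proof --- is not established.
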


Finally, we revisit the connection between the Northcott property and PAC (pseudo algebraically closed) fields, i.e.,  fields over which every geometrically integral variety has a rational point. 
Motivated by a question of Amoroso, David and Zannier \cite{ADZ}, 
the second author of this paper 
constructed in \cite{Feh} PAC fields with the Northcott property using the criterion from \cite{Wid11}. 
In Section \ref{sec-PAC}, we demonstrate how this construction can be made to work without Widmer's criterion,
thereby simplifying it significantly. 
This result stands independently, and its proof is not reliant on Sections \ref{sec-simp-Wid} and \ref{sec-field-N-not-Wid}.

\section{Simplifying the criterion for Galois extensions}\label{sec-simp-Wid}

\noindent
Let $M/K$ be a finite extension of number fields.
Recall from the introduction that we denote by $D_{M/K}$ its relative discriminant and by $N_{M/K}$ the relative norm. 
Morever, $\mathcal{O}_M$ denotes the ring of integers 
and $\Delta_M$ the absolute discriminant of $M$.
Given another number field $F$ let 
\[
 \gamma_{M}(F)=N_{F/\Q}(D_{MF/F})^{\frac{1}{[MF:\Q][MF:F]}}.
\]
We define the quantity
\[
 \gamma(M/K)=\sup_{F/K \textrm{ finite}} \gamma_{M}(F),
\]
as in \cite[formula (1.4)]{Wid23} and the new quantity
\[
 \gamma'({M/K})=\sup_{\substack{F/K \textrm{ finite}\\ F\subseteq M}} \gamma_M(F).
\]
We will need the following result that can be found in \cite{Toy} and is recalled here for clarity:

\begin{lemma}\label{toy}
Let $K$ be a number field and let $L/K$ and $L'/K$ be two finite extensions. Then $D_{L L'/K}$ divides the product \[D_{L/K}^{[L L':L]}\cdot D_{L'/K}^{[L L':L']}.\]
\end{lemma}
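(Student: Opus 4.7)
The plan is to combine the tower formula for the discriminant with a base-change bound for the different. Set $M=LL'$ and write $\mathfrak{d}_{M/L}$ for the different of $M/L$ (so that $D_{M/L}=N_{M/L}(\mathfrak{d}_{M/L})$). The tower formula for the discriminant gives $D_{M/K}=N_{L/K}(D_{M/L})\cdot D_{L/K}^{[M:L]}$, so it suffices to prove that $N_{L/K}(D_{M/L})$ divides $D_{L'/K}^{[M:L']}$.

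The crucial input I would establish is the base-change divisibility $\mathfrak{d}_{M/L}\mid \mathfrak{d}_{L'/K}\mathcal{O}_M$, verified one prime at a time. Completing at a prime $\mathfrak{P}$ of $\mathcal{O}_M$ above $\mathfrak{P}'\subset\mathcal{O}_{L'}$, $\mathfrak{P}_L\subset\mathcal{O}_L$, $\mathfrak{p}\subset\mathcal{O}_K$ reduces the statement to an analogous one for local fields of characteristic zero. There, the residue extensions are separable, so one can write $\mathcal{O}_{L'_{\mathfrak{P}'}}=\mathcal{O}_{K_\mathfrak{p}}[\alpha]$ for some $\alpha$ with minimal polynomial $f$ over $K_\mathfrak{p}$, giving $\mathfrak{d}_{L'_{\mathfrak{P}'}/K_\mathfrak{p}}=(f'(\alpha))$. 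The same $\alpha$ still generates $M_\mathfrak{P}$ over $L_{\mathfrak{P}_L}$ with some minimal polynomial $g$ dividing $f$, and writing $f=gh$ one has $f'(\alpha)=g'(\alpha)h(\alpha)$, so $g'(\alpha)\mid f'(\alpha)$. The classical fact that $\mathfrak{d}_{M_\mathfrak{P}/L_{\mathfrak{P}_L}}$ divides $g'(\alpha)\mathcal{O}_{M_\mathfrak{P}}$ for any integral generator (valid even when the ring of integers is not monogenic over the subring) then yields the base-change divisibility locally, and hence globally.

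Applying $N_{M/L}$ to the base-change divisibility gives $D_{M/L}\mid N_{M/L}(\mathfrak{d}_{L'/K}\mathcal{O}_M)$. Combining the two towers of norms $N_{M/K}=N_{L/K}\circ N_{M/L}=N_{L'/K}\circ N_{M/L'}$ with the identity $N_{M/L'}(\mathfrak{a}\mathcal{O}_M)=\mathfrak{a}^{[M:L']}$ for ideals $\mathfrak{a}$ of $\mathcal{O}_{L'}$ yields $N_{L/K}\bigl(N_{M/L}(\mathfrak{d}_{L'/K}\mathcal{O}_M)\bigr)=N_{L'/K}(\mathfrak{d}_{L'/K})^{[M:L']}=D_{L'/K}^{[M:L']}$. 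Plugging this into the tower formula for $D_{M/K}$ produces the asserted divisibility. The main obstacle is the base-change bound for differents, whose proof hinges on the local monogenicity of rings of integers in characteristic zero; the remaining steps are routine manipulations of the tower formula and standard identities for ideal norms.
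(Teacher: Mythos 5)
Your proof is correct and follows essentially the same route as the paper's: both reduce to the base-change divisibility of the different under compositum (the different of $LL'$ over one factor divides the extension of the different of the other factor over $K$), obtained by factoring the minimal polynomial of a common generator as $f=gh$ and using $f'(\alpha)=g'(\alpha)h(\alpha)$, and then conclude by multiplicativity of differents/discriminants and standard norm identities. The only cosmetic difference is that you verify the key divisibility locally after completion (via local monogenicity), whereas the paper invokes the global characterization of the different as generated by $f_a'(a)$ over all integral generators $a$.
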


\begin{proof}
We denote by $\delta_{L/K}$ the different ideal of $L/K$.
For any finite extension $M/K$,
$\delta_{M/K}$ is generated by all the $f_a'(a)$ where $a\in \mathcal{O}_M$ generates $M/K$ and $f_a(x)\in \mathcal{O}_K[x]$ is its minimal polynomial,
see \cite[Thm.~III.2.5]{Neu}.
So since $L=K(a)$ implies $LL'=L'(a)$ and the minimal polynomial of $a$ over $L'$ divides the minimal polynomial of $a$ over $K$,
we obtain that $\delta_{L/K}\subseteq\delta_{LL'/L'}$.
Thus, $\delta_{LL'/L'}$ divides $\delta_{L/K}\mathcal{O}_{LL'}$.
So since $\delta_{L L'/K}=\delta_{L L'/L'}\delta_{L'/K}$ 
by the multiplicativity of differents
(\cite[Prop.~III.2.2(i)]{Neu}),
$\delta_{L L'/K}$
divides $\delta_{L/K}\delta_{L'/K}\mathcal{O}_{LL'}$.
As the norm of the different is the discriminant 
(\cite[Thm.~III.2.9]{Neu}),
we obtain that
$D_{L L'/K}=N_{L L'/K}(\delta_{L L'/K})$ divides 
\[
 N_{L L'/K}(\delta_{L/K}\delta_{L'/K}\mathcal{O}_{LL'})=D_{L/K}^{[L L':L]}D_{L'/K}^{[L L':L']},
\]
cf.~\cite[Prop.~III.1.6(i),(ii)]{Neu}.
\end{proof}

\begin{proposition}\label{simply-Wid-prop}
Let $M$ and $F$ be two number fields which are linearly disjoint over their intersection $M\cap F$. Then $\gamma_M(F)\leq \gamma_M(M\cap F)$.
\end{proposition}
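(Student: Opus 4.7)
My plan is to set $E:=M\cap F$ and reduce the problem to bounding $D_{MF/F}$ in terms of $D_{M/E}$ via Lemma \ref{toy}, then to exploit that $ME=M$ so that $\gamma_M(E)=N_{E/\Q}(D_{M/E})^{1/([M:\Q][M:E])}$ becomes directly comparable to $\gamma_M(F)$. The essential observation is that linear disjointness of $M$ and $F$ over $E$ forces the degree identities $[MF:M]=[F:E]$ and $[MF:F]=[M:E]$, which are what make the exponents of the various discriminants match up.

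First I would apply Lemma \ref{toy} with base field $E$ to the extensions $M/E$ and $F/E$: this gives that $D_{MF/E}$ divides $D_{M/E}^{[F:E]}\cdot D_{F/E}^{[M:E]}$. Independently, the tower formula for discriminants applied to $E\subseteq F\subseteq MF$ reads
\[
D_{MF/E}=N_{F/E}(D_{MF/F})\cdot D_{F/E}^{[MF:F]}=N_{F/E}(D_{MF/F})\cdot D_{F/E}^{[M:E]}.
\]
Comparing these two expressions and cancelling the common factor $D_{F/E}^{[M:E]}$ (a nonzero ideal in the Dedekind domain $\mathcal{O}_E$) leaves the divisibility $N_{F/E}(D_{MF/F})\mid D_{M/E}^{[F:E]}$. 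Applying $N_{E/\Q}$ and using transitivity of the norm ($N_{E/\Q}\circ N_{F/E}=N_{F/\Q}$) turns this into the inequality of positive integers $N_{F/\Q}(D_{MF/F})\leq N_{E/\Q}(D_{M/E})^{[F:E]}$.

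To finish, I would raise both sides to the $1/([MF:\Q][MF:F])$-th power and use the degree identities $[MF:\Q]=[M:E][F:\Q]$, $[MF:F]=[M:E]$ and $[F:\Q]=[F:E][E:\Q]$ to simplify the exponent on the right from $[F:E]/([MF:\Q][MF:F])$ to $1/([M:E]^2[E:\Q])=1/([M:\Q][M:E])$. Since $ME=M$, this is exactly the exponent in $\gamma_M(E)$, so $\gamma_M(F)\leq\gamma_M(E)=\gamma_M(M\cap F)$. The only substantial obstacle is the book-keeping of degrees so that the exponents on each side match precisely; the conceptual ingredients (Lemma \ref{toy}, the tower formula, transitivity of the norm, and the consequences of linear disjointness) all plug in directly.
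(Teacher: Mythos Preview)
Your proposal is correct and follows essentially the same route as the paper: both set $E=M\cap F$, invoke Lemma~\ref{toy} for $M/E$ and $F/E$, and use the degree identities $[MF:F]=[M:E]$, $[MF:M]=[F:E]$ coming from linear disjointness. The only cosmetic difference is that the paper translates everything into absolute discriminants via $|\Delta_L|=|N_{K/\Q}(D_{L/K})|\cdot|\Delta_K|^{[L:K]}$ and then manipulates those, whereas you stay with relative discriminants and use the tower formula $D_{MF/E}=N_{F/E}(D_{MF/F})\cdot D_{F/E}^{[MF:F]}$ to cancel $D_{F/E}^{[M:E]}$ directly; your packaging is arguably a bit tidier, but the substance is the same.
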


\begin{proof}
Let $E=M\cap F$.
We will use several times that for number fields $K\subseteq L$,
\begin{equation}\label{eqn:disc_tower}
 |\Delta_{L}| = |N_{K/\mathbb{Q}}(D_{L/K})|\cdot|\Delta_{K}|^{[L:K]},    
\end{equation}
cf.~\cite[Cor.~III.2.10]{Neu}.
In particular, 
\begin{eqnarray}
\label{eqn:gamma_M_F}\gamma_M(F)&=&\frac{|\Delta_{MF}|^{\frac{1}{[MF:\Q][MF:F]}}}{|\Delta_F|^{\frac{1}{[MF:\Q]}}},\\
\label{eqn:gamma_M_E}\gamma_M(E)&=&\frac{|\Delta_{M}|^{\frac{1}{[M:\Q][M:E]}}}{|\Delta_{E}|^{\frac{1}{[M:\Q]}}}.
\end{eqnarray}
By Lemma \ref{toy}, $D_{MF/E}$ divides
$D_{M/E}^{[MF:M]}\cdot D_{F/E}^{[MF:F]}$,
so by the multiplicativity of norms we get
\begin{eqnarray*}
|N_{E/\Q}(D_{MF/E})|&\leq& |N_{E/\Q}(D_{M/E})|^{[MF:M]}\cdot|N_{E/\Q}(D_{F/E})|^{[MF:F]}.
\end{eqnarray*}
Thus 
\begin{eqnarray*}
|\Delta_{MF}|&\stackrel{(\ref{eqn:disc_tower})}{=}&|N_{E/\Q}(D_{MF/E})|\cdot|\Delta_{E}|^{[MF:E]}\\
&\leq&  |N_{E/\Q}(D_{M/E})|^{[MF:M]}\cdot|N_{E/\Q}(D_{F/E})|^{[MF:F]}\cdot|\Delta_{E}|^{[MF:E]}\\
&=&\left(|N_{E/\Q}(D_{M/E})|\cdot|\Delta_{E}|^{[M:E]}\right)^{[MF:M]}\cdot|N_{E/\Q}(D_{F/E})|^{[MF: F]}\\
&\stackrel{(\ref{eqn:disc_tower})}{=}&|\Delta_M|^{[MF: M]}\cdot|N_{E/\Q}(D_{F/E})|^{[MF: F]}\\
&\stackrel{(\ref{eqn:disc_tower})}{=}&\frac{|\Delta_M|^{[MF:M]} \cdot|\Delta_F|^{[MF:F]}}{|\Delta_{E}|^{[MF:E]}}.
\end{eqnarray*}
Therefore 
\begin{eqnarray*}
\gamma_{M}(F)&\stackrel{(\ref{eqn:gamma_M_F})}{=}&\frac{|\Delta_{MF}|^{\frac{1}{[MF:\Q][MF:F]}}}{|\Delta_F|^{\frac{1}{[MF:\Q]}}}\\
&\leq& \left(\frac{|\Delta_M|^{[MF:M]}\cdot |\Delta_F|^{[MF:F]}}{|\Delta_{E}|^{[MF:E]}}\right)^{\frac{1}{[MF:\Q][MF:F]}} \cdot\frac{1}{|\Delta_F|^{\frac{1}{[MF:\Q]}}}\\
&\stackrel{(*)}{=}&\frac{|\Delta_M|^\frac{1}{[M:\Q][M:E]}}{|\Delta_{E}|^{\frac{1}{[E:\Q][M:E]}}}\\
&\stackrel{(\ref{eqn:gamma_M_E})}{=}&\gamma_M(E)
\end{eqnarray*}
where in $(*)$ we used that $[MF:F]=[M:E]$ since $M$ and $F$ are linearly disjoint over $E$.
\end{proof}

\begin{cor}\label{simply-Wid} 
Let $K$ be a number field and $M/K$ a finite Galois extension.
Then $\gamma'({M/K})=\gamma({M/K})$.
\end{cor}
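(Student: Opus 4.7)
The plan is to reduce to Proposition \ref{simply-Wid-prop} by showing that any finite $F/K$ can be replaced by $E := M \cap F \subseteq M$ without decreasing $\gamma_M$.

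First, the inequality $\gamma'(M/K) \leq \gamma(M/K)$ is immediate, since $\gamma'$ is a supremum over a subfamily of the one defining $\gamma$. For the reverse inequality, I would fix an arbitrary finite extension $F/K$ and set $E = M \cap F$, which is a finite extension of $K$ contained in $M$. The key preliminary step is to verify the linear disjointness hypothesis of Proposition \ref{simply-Wid-prop}: namely, that $M$ and $F$ are linearly disjoint over $E$.

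This disjointness is the standard consequence of $M/K$ being Galois. I would spell it out by considering the restriction map $\Gal(MF/F) \to \Gal(M/K)$, which is injective (any element of $\Gal(MF/F)$ fixing $M$ fixes both $M$ and $F$, hence $MF$), and whose image is precisely $\Gal(M/E)$: its fixed field in $M$ lies in $F$ (being fixed also by $\Gal(MF/F)$) and in $M$, so it lies in $E$; conversely $E \subseteq F$ is clearly fixed. Hence $[MF:F] = |\Gal(MF/F)| = [M:E]$, which is the definition of linear disjointness over $E$.

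With this in hand, Proposition \ref{simply-Wid-prop} yields $\gamma_M(F) \leq \gamma_M(E)$, and since $E/K$ is finite with $E \subseteq M$, we have $\gamma_M(E) \leq \gamma'(M/K)$. Taking the supremum over all finite $F/K$ on the left gives $\gamma(M/K) \leq \gamma'(M/K)$, completing the proof.

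The only real content is the Galois-theoretic linear disjointness; once that is recorded, the corollary is a direct application of the previously proved proposition, so I do not anticipate any genuine obstacle.
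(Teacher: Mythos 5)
Your proposal is correct and follows essentially the same route as the paper: both reduce to Proposition \ref{simply-Wid-prop} via the linear disjointness of $M$ and $F$ over $M\cap F$, which the paper simply cites (Fried--Jarden, Cor.~2.5.2) while you prove it directly via the restriction map $\Gal(MF/F)\to\Gal(M/K)$. No issues.
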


\begin{proof}
As $M/K$ is Galois, given a finite extension $F/K$,  $M$ and $F$ are linearly disjoint over $M\cap F$ (see for instance \cite[Cor.~2.5.2]{FJ}), so by Proposition \ref{simply-Wid-prop} we have $\gamma_M(F)\leq \gamma_M(M\cap F)$.
Thus
\begin{equation*}
 \gamma(M/K) = \sup_{{F/K \textrm{ finite}}}\gamma_M(F)  = 
  \sup_{\substack{F/K \textrm{ finite}\\ F\subseteq M}}\gamma_M(F) = \gamma'(M/K). \qedhere   
\end{equation*}
\end{proof}

\section{An example to which the criterion does not apply: proof of Theorem \ref{main-th}}\label{sec-field-N-not-Wid}

\noindent
We start with a result providing 
an upper bound for the quantity $\gamma'$ in certain situations.
We then apply this to the construction from \cite[Theorem 1.5]{CF}.

\begin{proposition}\label{main-details}
Let  $(p_i)_{i\in\mathbb{N}}$ be a sequence of distinct prime numbers and let $(L_i)_{i\in\mathbb{N}}$ be a sequence of number fields satisfying the following properties:
\begin{enumerate}[(i)]
\item\label{itGal} $L_i/\Q$ is a cyclic Galois extension of degree $p_i$, for each $i$;
\item\label{itdisc} the discriminants $\Delta_{L_i}$ of the number fields $L_i$ are pairwise coprime;
\end{enumerate}
Then for all number fields $K\subseteq M$ contained in the compositum $L$ of the $L_i$ we have
\[
 \gamma'({M}/K)\leq \sup_{i\in\mathbb{N}}|\Delta_{L_i}|^{{1/p_i^2}}.
\]
\end{proposition}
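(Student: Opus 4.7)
The plan exploits the product structure of the Galois group. Since each $L_i/\mathbb{Q}$ is Galois of distinct prime degree $p_i$, the $L_i$ are pairwise linearly disjoint over $\mathbb{Q}$ and $\Gal(L/\mathbb{Q})\cong\prod_i\mathbb{Z}/p_i\mathbb{Z}$. Its open subgroups are determined by their projections onto each factor, so every number field $M\subseteq L$ must be of the form $M=\prod_{i\in I}L_i$ for a finite $I\subseteq\mathbb{N}$, and every intermediate $F$ with $\mathbb{Q}\subseteq F\subseteq M$ is of the form $F=\prod_{i\in S}L_i$ for some $S\subseteq I$. Since replacing $K$ by $\mathbb{Q}$ only enlarges the set of admissible $F$, it suffices to bound $\gamma_M(F)$ uniformly for every such $F$.

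Next I would express $\gamma_M(F)$ in closed form. Using $F\subseteq M$ (so $MF=M$), formula (\ref{eqn:gamma_M_F}) reduces to
\[\gamma_M(F)=\frac{|\Delta_M|^{1/([M:\mathbb{Q}][M:F])}}{|\Delta_F|^{1/[M:\mathbb{Q}]}}.\]
Iterating Lemma \ref{toy} along the decomposition $M=\prod_{i\in I}L_i$ yields the upper bound $|\Delta_M|\leq\prod_{i\in I}|\Delta_{L_i}|^{[M:\mathbb{Q}]/p_i}$. For the denominator, the tower formula (\ref{eqn:disc_tower}) applied to each $L_i\subseteq F$ shows that $|\Delta_{L_i}|^{[F:\mathbb{Q}]/p_i}$ divides $|\Delta_F|$; the pairwise coprimality of the $\Delta_{L_i}$ (hypothesis \ref{itdisc}) then upgrades these individual divisibilities to the matching lower bound $|\Delta_F|\geq\prod_{i\in S}|\Delta_{L_i}|^{[F:\mathbb{Q}]/p_i}$.

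Substituting both bounds and using $[F:\mathbb{Q}]/[M:\mathbb{Q}]=1/[M:F]$, the factors with $i\in S$ cancel exactly and I obtain
\[\gamma_M(F)\leq\prod_{i\in U}|\Delta_{L_i}|^{1/(p_i[M:F])},\]
where $U=I\setminus S$ and $[M:F]=\prod_{i\in U}p_i$. Writing $A=\sup_{i}|\Delta_{L_i}|^{1/p_i^2}$ (which is $\geq 1$ since each $L_i\neq\mathbb{Q}$), each factor is at most $A^{p_i/[M:F]}$, so $\gamma_M(F)\leq A^{\sigma/\pi}$ with $\sigma=\sum_{i\in U}p_i$ and $\pi=\prod_{i\in U}p_i$. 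The argument closes with the elementary inequality $\sigma\leq\pi$ for any finite set $U$ of distinct primes, which is clear for $|U|\leq 1$ and follows for $|U|\geq 2$ by induction from $(p-1)(q-1)\geq 1$.

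The main technical point is the lower bound on $|\Delta_F|$: turning the individual divisibilities of $|\Delta_F|$ by each $|\Delta_{L_i}|^{[F:\mathbb{Q}]/p_i}$ into divisibility by their product uses the coprimality hypothesis \ref{itdisc} in a crucial way. Everything else is a direct bookkeeping calculation with the tower formula and Lemma \ref{toy}.
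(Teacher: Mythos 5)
Your proof is correct and follows essentially the same route as the paper: identify the subfields of $M$ via the (pro)cyclic Galois group, reduce $\gamma_M(F)$ to a ratio of discriminants, and conclude with the elementary inequality $\sum_{i\in U}p_i\leq\prod_{i\in U}p_i$. The only real difference is in the discriminant bookkeeping: the paper invokes the exact multiplicativity $|\Delta_M|=|\Delta_F|^{[M':\Q]}|\Delta_{M'}|^{[F:\Q]}$ for composita with coprime discriminants (citing Narkiewicz) to evaluate $\gamma_M(F)=|\Delta_{M'}|^{1/[M':\Q]^2}$ exactly, whereas you assemble a one-sided bound from Lemma \ref{toy} (upper bound on $|\Delta_M|$, no coprimality needed there) and the tower formula plus coprimality (lower bound on $|\Delta_F|$); both computations land on the same quantity $\prod_{i\in U}|\Delta_{L_i}|^{1/(p_i[M:F])}$, yours as an upper bound and the paper's as an identity. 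Your version is marginally more self-contained since it reuses only tools already present in Section \ref{sec-simp-Wid}, at the cost of losing the exact value of $\gamma_M(F)$, which is not needed for the stated inequality.
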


\begin{proof}
For $I\subseteq\mathbb{N}$ let $L_I$ denote the compositum
of the $(L_i)_{i\in I}$.
By (i), $L/\mathbb{Q}$ is Galois with Galois group
isomorphic to the procyclic group $\prod_{i=1}^\infty\mathbb{Z}/p_i\mathbb{Z}$,
hence every number field $N$ contained in $L$
is of the form
$N=L_{I_N}$ where $I_N=\{i:p_i|[N:\mathbb{Q}]\}$.
For $K\subseteq F\subseteq M$ we get $I_K\subseteq I_F\subseteq I_M$.
Moreover, with $M'=L_{I_M\mysetminus I_F}$
we get that $M=FM'$ and $F\cap M'=\mathbb{Q}$,
in particular $[M:F]=[M':\mathbb{Q}]$.
The map $F\mapsto I_M\mysetminus I_F$ is a bijection between
intermediate fields $K\subseteq F\subseteq M$
and subsets of $I_M\mysetminus I_K$.

By (ii), the discriminants of the number fields $F$ and $M'$ are coprime,
and therefore
\[
 |\Delta_M|=|\Delta_F|^{[M':\Q]}\cdot|\Delta_{M'}|^{[F:\Q]},
\]
cf.~\cite[Thm.~4.26]{Nar}.
Thus
\[
 \gamma_{M}(F)=\frac{\left(|\Delta_F|^{[M':\Q]}\cdot|\Delta_{M'}|^{[F:\Q]}\right)^{\frac{1}{[M:\Q][M:F]}}}{|\Delta_F|^\frac{1}{[M:\Q]}}=|\Delta_{M'}|^{\frac{1}{[M':\Q]^2}}.
 \]
Therefore 
\[
 \gamma'(M/K)=\sup_{K\subseteq F\subseteq M}\gamma_M(F)=\sup_{J\subseteq I_M\mysetminus I_K} |\Delta_{L_J}|^{\frac{1}{[L_J:\Q]^2}}.
\]
Since (again by (ii)), $L_J$ is the compositum of fields with pairwise coprime discriminants, 
\[
 |\Delta_{L_J}|=\prod_{i\in J}|\Delta_{L_i}|^{[L_J:L_i]}=\prod_{i\in J}|\Delta_{L_i}|^{\frac{[L_J:\mathbb{Q}]}{[L_i:\mathbb{Q}]}},
\]
and thus 
\[
|\Delta_{L_J}|^{\frac{1}{[L_J:\Q]^2}}=\prod_{i\in J}\left(|\Delta_{L_i}|^{\frac{1}{[L_i:\Q]^2}}\right)^{\frac{[L_i:\Q]}{[L_J:\Q]}}\leq \left(\sup_{i\in\mathbb{N}}|\Delta_{L_i}|^{\frac{1}{[L_i:\Q]^2}}\right)^{\frac{\sum_{i\in J}[L_i:\Q]}{\prod_{i\in J}[L_i:\Q]}}\leq \sup_{i\in\mathbb{N}}|\Delta_{L_i}|^{{1/p_i^2}}.\qedhere
\]
\end{proof}

Now Theorem \ref{main-th} easily follows from \cite{CF}:

\begin{proof}[Proof of Theorem \ref{main-th}] 
The field $L$ with the Northcott property constructed in \cite[\S3, Proof of Theorem 1.6]{CF} 
is obtained as a compositum of a sequence $(F_i)_{i\in\mathbb{N}}$
of cyclic Galois extensions of $\mathbb{Q}$ with $[F_i:\mathbb{Q}]=p_i$ pairwise distinct prime numbers, $|\Delta_{F_i}|^{1/p_i^2}\leq 3$
(this is property (c) in that proof)
and ${\rm gcd}(\Delta_{F_i},\Delta_{F_1}\cdots\Delta_{F_{i-1}})=1$
(this follows from property (b) in that proof 
and is stated explicitly in the middle of p.~2410).
Therefore, by Proposition \ref{main-details} (with $L_i=F_i$),
$L$ satisfies
\begin{equation}\label{eq:leq3}
 \liminf_{K\subseteq M\subset L}\gamma'({M}/K)\leq 3   
\end{equation}
for every number field $K$ contained in $L$.
As $L/\Q$ is abelian, so is $L/K$.
In particular, every such $M/K$ is Galois, hence by Corollary \ref{simply-Wid} we have that 
\[
 \liminf_{K\subseteq M\subset L}\gamma({M/K})=\liminf_{K\subseteq M\subset L}\gamma'({M/K}).
\]  
In particular, by (\ref{eq:leq3}) we obtain that
$\liminf_{K\subseteq M\subset L}\gamma({M/K})<\infty$,
so $L$ is an infinite Galois extension of $\mathbb{Q}$
that does not satisfy condition $(\ref{eqn:liminf})$ of Theorem \ref{new-crit-Wid}
for any number field $K\subseteq L$.
\end{proof}

\section{PAC fields with the Northcott property revisited}\label{sec-PAC}

\noindent
Amoroso, David and Zannier asked in \cite{ADZ} whether there exist subfields of $\overline{\mathbb{Q}}$ that are pseudo-algebraically closed (PAC) and have the so-called Bogomolov property, which is a weakening of the Northcott property.
Pottmeyer in \cite[Example 3.2]{Pottmeyer} constructs such fields,
and \cite[Prop.~1.2]{Feh} gives a different construction of PAC fields that even have the stronger Northcott property.
The goal of this section is to show
how this construction can be significantly simplified.

\begin{lemma}\label{lem:lindisj}
Let $K$ be a number field
and $(L_i)_{i\in\mathbb{N}}$ a pairwise linearly disjoint family of proper finite extensions of $K$ (within $\overline{\mathbb{Q}}$) of the same degree.
Then
$$
 \liminf_{i\in\mathbb{N}}\inf_{\alpha\in L_i\mysetminus K}h(\alpha) = \infty.
$$
\end{lemma}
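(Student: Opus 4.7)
The plan is to argue by contradiction, combining Northcott's theorem on $\overline{\Q}$ with the fact that pairwise linear disjointness of finite extensions of $K$ within $\overline{\Q}$ implies $L_i \cap L_j = K$ for all $i \neq j$. Together these two ingredients force an impossible abundance of elements of bounded height concentrated in a single finite set.

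Concretely, I would suppose for contradiction that the liminf equals some finite $B$. Then along an infinite subsequence of indices $i$ one finds elements $\alpha_i \in L_i \mysetminus K$ with $h(\alpha_i) \leq B+1$. Writing $d = [L_i:K]$ for the common degree, each $\alpha_i$ satisfies $[\Q(\alpha_i):\Q] \leq d\,[K:\Q]$, so Northcott's theorem (applied with the degree bound $d\,[K:\Q]$ and the height bound $B+1$) confines the $\alpha_i$ to a fixed finite subset of $\overline{\Q}$. Pigeonhole then produces a single $\alpha \notin K$ that equals $\alpha_i$ for infinitely many $i$; in particular $\alpha \in L_i \cap L_j$ for some $i \neq j$.

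The closing step is to observe that the linear disjointness of $L_i/K$ and $L_j/K$ forces $L_i \cap L_j = K$: if some $\beta \in L_i \cap L_j$ had $[K(\beta):K] = m > 1$, then restricting the injective multiplication map $L_i \otimes_K L_j \to L_iL_j$ to the sub-$K$-algebra $K(\beta) \otimes_K K(\beta)$ would yield a non-injective map into $K(\beta)$ (the source has $K$-dimension $m^2 > m$), a contradiction. Alternatively, this is a well-known consequence of the definition and can be cited in the spirit of \cite[Cor.~2.5.2]{FJ}, already used elsewhere in the paper. Hence $\alpha \in K$, contradicting the choice of $\alpha$.

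The main conceptual subtlety I anticipate is precisely this last point: linear disjointness is \emph{a priori} stronger than having trivial intersection, so one must not silently invoke the Galois version (the hypothesis of the lemma makes no Galois assumption on the $L_i$). Once the implication linear disjointness $\Rightarrow L_i \cap L_j = K$ is spelled out, the rest is a direct Northcott plus pigeonhole argument with no computation to perform.
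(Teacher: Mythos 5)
Your proposal is correct and is essentially the paper's own argument unpacked: the paper notes that pairwise linear disjointness forces the sets $L_i\setminus K$ to be pairwise disjoint and then invokes Northcott's theorem directly, which is exactly your Northcott-plus-pigeonhole contradiction. Your explicit verification that linear disjointness implies $L_i\cap L_j=K$ (via the tensor-product dimension count) is a detail the paper leaves implicit, and it is carried out correctly.
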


\begin{proof}
As $(L_i\mysetminus K)\cap (L_j\mysetminus K)=\emptyset$ for every $i\neq j$ by the linear disjointness assumption,
this is an immediate consequence of Northcott’s theorem  
mentioned in the introduction.
\end{proof}

\begin{proposition}
For every number field $K$ there exists an algebraic extension $L/K$ such that $L$ is PAC and has the Northcott property.
\end{proposition}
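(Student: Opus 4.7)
The plan is to construct $L$ as the union of an ascending chain of number fields $K = K_0 \subset K_1 \subset K_2 \subset \cdots$ inside $\overline{\mathbb{Q}}$ satisfying two properties: (a) for every $m \geq 0$, every absolutely irreducible polynomial $f \in K_m[X,Y]$ separable in $Y$ has a zero in $L^2$; and (b) for every $n \geq 1$, every $\alpha \in K_n \mysetminus K_{n-1}$ satisfies $h(\alpha) \geq n$. Property (a) will ensure that $L$ is PAC: since $L$ is perfect and any absolutely irreducible polynomial over $L$ has its coefficients in some $K_m$, the classical characterization of PAC fields via zeros of absolutely irreducible plane curves (e.g.~\cite[Cor.~11.2.5]{FJ}) applies. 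Property (b) will give the Northcott property: if $\alpha \in L$ has $h(\alpha) < B$, then $\alpha$ cannot lie in $K_n \mysetminus K_{n-1}$ for any $n \geq B$, hence $\alpha$ belongs to the number field $K_{\lceil B \rceil}$, in which Northcott's theorem leaves only finitely many elements of height below $B$.

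The chain is built by induction, in the style of \cite{Feh} but with Widmer's criterion replaced by Lemma~\ref{lem:lindisj}. Fix a diagonal enumeration $(f_n)_{n \geq 1}$ of the countably many absolutely irreducible polynomials with algebraic coefficients separable in $Y$, arranged so that the coefficients of $f_n$ lie in $K_{n-1}$ and every such polynomial is eventually listed. At step $n$, if $\deg_Y f_n = 1$ then $f_n$ already has a zero with both coordinates in $K_{n-1}$ and we set $K_n := K_{n-1}$. Otherwise $d := \deg_Y f_n \geq 2$, and we apply Hilbert's irreducibility theorem to the Hilbertian number field $K_{n-1}$: this produces an infinite family of specializations $a \in K_{n-1}$ such that $f_n(a, Y)$ is irreducible over $K_{n-1}$, yielding proper degree-$d$ extensions $F_a = K_{n-1}(b_a)/K_{n-1}$ with $f_n(a, b_a) = 0$. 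A standard refinement of Hilbert's theorem permits the extraction of an infinite subfamily of the $F_a$ that are pairwise linearly disjoint over $K_{n-1}$. By Lemma~\ref{lem:lindisj} applied to this subfamily, one has $\inf\{h(\alpha) : \alpha \in F_a \mysetminus K_{n-1}\} \geq n$ for all but finitely many $a$, and we set $K_n := F_a$ for any such $a$.

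The principal obstacle is the existence of the infinite pairwise linearly disjoint family of Hilbertian specializations of the prescribed degree. Given finitely many previously produced degree-$d$ extensions of $K_{n-1}$, the specializations $a$ for which $F_a$ fails to be linearly disjoint from their compositum form a thin set, while the Hilbert set of specializations with $f_n(a,Y)$ irreducible is Zariski-dense in $K_{n-1}$; an inductive argument on the number of previously chosen extensions then extracts the required linearly disjoint family. Once this refinement is in hand, the properties (a) and (b) propagate by construction through every step, and $L = \bigcup_n K_n$ is the desired algebraic extension of $K$ that is simultaneously PAC and satisfies the Northcott property.
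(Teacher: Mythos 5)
Your proposal is correct and follows essentially the same route as the paper: a tower $K=K_0\subset K_1\subset\cdots$ built via Hilbert's irreducibility theorem with linearly disjoint specializations, with Lemma~\ref{lem:lindisj} invoked at each step to force a height gap, so that Northcott's theorem applied to the number field $K_n$ yields the Northcott property of $L=\bigcup_n K_n$. The only difference is bookkeeping for the PAC part: you diagonally enumerate absolutely irreducible polynomials over all the $K_m$ and use the plane-curve characterization of PAC, whereas the paper enumerates Galois covers of $\mathbb{P}^1$ over $K$ alone and combines the stability-of-fields theorem with the fact that $L/K$ is algebraic; your variant accordingly needs the standard (but slightly less immediate, since your $F_a$ are not Galois over $K_{n-1}$) refinement that the specializations can be chosen linearly disjoint, which you correctly flag and sketch.
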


\begin{proof}
Let $(C_n)_{n\in\mathbb{N}}$ be an enumeration of the geometrically integral curves over $K$
that admit a Galois morphism $\pi_n\colon C_n\rightarrow\mathbb{P}^1$.
Let $K_0=K$
and iteratively construct $K_{n+1}$ from $K_n$ as follows:
By Hilbert’s irreducibility theorem (e.g. in the form \cite[Lemma 16.2.6]{FJ}), there is an infinite sequence
$L_1,L_2,\dots$ of specializations 
of $K_n(C_n)/K_n(\mathbb{P}^1)$
such that 
$[L_i:K_n]={\rm deg}(\pi_n)$ for every $i$,
and the family $(L_i)_{i\in\mathbb{N}}$ is linearly disjoint over $K_n$.
In particular, it is pairwise linearly disjoint over $K_n$,
so by Lemma \ref{lem:lindisj} there is some $K_{n+1}=L_i$ with 
$\inf_{\alpha\in K_{n+1}\mysetminus K_n}h(\alpha)>n$.

Let $L=\bigcup_{n\in\mathbb{N}}K_n$.
For every $n$, $C_n(L)\supseteq C_n(K_{n+1})\neq\emptyset$.
Since every geometrically integral curve $C$ over $K$ is dominated
by some $C_n$ (this follows from 
the so-called stability of fields theorem \cite[Theorem 18.9.3]{FJ}),
we get that $C(L)\neq\emptyset$
for every such $C$.
Since $L/K$ is algebraic, this implies that $L$ is PAC (\cite[Theorem 11.2.3]{FJ}).
Moreover, for every $n$, Northcott’s theorem gives that 
$$
 \{\alpha\in L:h(\alpha)<n\}\subseteq\{\alpha\in K_n:h(\alpha)<n\}
$$
is finite, so $L$ has the Northcott property.
\end{proof}

The first construction of a PAC field with the Northcott property in \cite{Feh} differs from the short proof given here
in that it needed a more elaborate version of Hilbert's irreducibility theorem that allowed a certain control
on the ramification of the extensions $L_i/K_n$,
so that then Widmer's criterion could be applied to conclude that $L=\bigcup_{n\in\mathbb{N}}K_n$ has the Northcott property.
The short proof here needs only the existence of infinitely many linearly disjoint specializations and can conclude with Northcott's theorem rather than Widmer's criterion.

\section*{Acknowledgements}

\noindent
The authors would like to thank Martin Widmer for interesting discussion.

Sara Checcoli’s work is supported by the French National Research Agency in the framework of the Investissements d’avenir program (ANR-15-IDEX-02). 
Arno Fehm was supported by the Institut Henri Poincar\'e.

\end{document}